\newtheorem{theorem}{Theorem}
\newtheorem{definition}[theorem]{Definition}
\newtheorem{lemma}[theorem]{Lemma}
\newtheorem{example}[theorem]{Example}
\newtheorem{corollary}[theorem]{Corollary}
\title{Residuation in modular lattices and posets}
\author{Ivan Chajda and Helmut L\"anger}
\date{}
\begin{document}
\footnotetext[1]{Support of the research of both authors by \"OAD, project CZ~04/2017, support of the research of the first author by IGA, project P\v rF~2018~012, and support of the research of the second author by the Austrian Science Fund (FWF), project I~1923-N25, is gratefully acknowledged.}
\maketitle
\begin{abstract}
We show that every complemented modular lattice can be converted into a left residuated lattice where the binary operations of multiplication and residuum are term operations. The concept of an operator left residuated poset was introduced by the authors recently. We show that every strongly modular poset with complementation as well as every strictly modular poset with complementation can be organized into an operator left residuated poset in such a way that the corresponding operators $M(x,y)$ and $R(x,y)$ can be expressed by means of the operators $L$ and $U$ in posets. We describe connections between the operator left residuation in these posets and the residuation in their lattice completion. We also present examples of strongly modular and strictly modular posets.
\end{abstract}
 
{\bf AMS Subject Classification:} 06A11, 06C15, 06C05

{\bf Keywords:} Operator residuation, modular lattice, complementation, modular poset, strongly modular poset, strictly modular poset, Dedekind-MacNeille completion

Residuated structures in general and residuated lattices in particular play an important role in algebraic semantics of non-classical logics, for example in the so-called fuzzy logic, see e.g.\ \cite B and \cite{GJKO}. In the literature there exist various definitions of the basic notions. We will use the following definition of a so-called integral left residuated l-groupoid (as defined in \cite{CLa}). For our reasons, we will use a shorter name.

\begin{definition}
A {\em left residuated lattice} is an algebra $\mathbf L=(L,\vee,\wedge,\odot,\rightarrow,0,1)$ of type $(2,2,2,2,0,0)$ satisfying the following conditions for all $x,y,z\in L$:
\begin{enumerate}
\item[{\rm(i)}] $(L,\vee,\wedge,0,1)$ is a bounded lattice,
\item[{\rm(ii)}] $x\odot1\approx1\odot x\approx x$,
\item[{\rm(iii)}] $x\odot y\leq z$ if and only if $x\leq y\rightarrow z$.
\end{enumerate}
Condition {\rm(iii)} is called {\em left adjointness}. If $\odot$ is commutative then $\mathbf L$ is called a {\em residuated lattice} and {\rm(iii)} is called {\em adjointness}. The left residuated lattice $\mathbf L$ is called {\em divisible} if it satisfies the identity
\[
(x\rightarrow y)\odot x\approx x\wedge y.
\]
\end{definition}

Let us note that in case $\odot$ is, moreover associative then it is called a {\em $t$-norm} (see e.g.\ \cite{GJKO}). In this case our notion of a residuated lattice coincides with that introduced in \cite B.

It is well-known that if $(B,\vee,\wedge,{}',0,1)$ is a Boolean algebra and we put
\begin{align*}
      x\odot y & :=x\wedge y, \\
x\rightarrow y & :=x'\vee y
\end{align*}
for all $x,y\in B$ then $(B,\vee,\wedge,\odot,\rightarrow,0,1)$ is a residuated lattice. Unfortunately, if $(L,\vee,\wedge,{}',0,1)$ is an orthomodular lattice then it cannot be converted into a residuated lattice. However, an orthomodular lattice can be converted into a left residuated one (as shown in \cite{CL17}). We will show that similar results can be obtained for complemented modular lattices.

Recall that a unary operation $'$ on a bounded lattice $(L,\vee,\wedge,0,1)$ is called
\begin{itemize}
\item {\em antitone} if $x\leq y$ implies $x'\geq y'$,
\item an {\em involution} if it satisfies the identity $x''\approx x$,
\item a {\em complementation} if it satisfies the identities $x\vee x'\approx1$ and $x\wedge x'\approx0$.
\end{itemize}
A {\em lattice} $(L,\vee,\wedge)$ id called {\em modular} if it satisfies the identity $(x\vee y)\wedge(x\vee z)\approx x\vee(y\wedge(x\vee z))$. A bounded lattice $(L,\vee,\wedge,{}',0,1)$ with a unary operation is called an
\begin{itemize}
\item {\em ortholattice} if $'$ is both a complementation and an antitone involution,
\item {\em orthomodular lattice} if it is an ortholattice satisfying the identity $x\vee((x\vee y)\wedge x')\approx x\vee y$.
\end{itemize}
It is well-known that every modular ortholattice is orthomodular. However, there exist modular lattices with a complementation which are not orthomodular as the following example shows:

\begin{example}
Let $\mathbf L=(L,\vee,\wedge,{}',0,1)$ denote the bounded modular lattice with an involution whose Hasse diagram is depicted in Fig.~1.

\vspace*{-2mm}

\[
\setlength{\unitlength}{7mm}
\begin{picture}(10,9)
\put(7,2){\circle*{.3}}
\put(3,4){\circle*{.3}}
\put(5,4){\circle*{.3}}
\put(7,4){\circle*{.3}}
\put(9,4){\circle*{.3}}
\put(1,6){\circle*{.3}}
\put(3,6){\circle*{.3}}
\put(5,6){\circle*{.3}}
\put(7,6){\circle*{.3}}
\put(3,8){\circle*{.3}}
\put(7,2){\line(-2,1)4}
\put(7,2){\line(-1,1)2}
\put(7,2){\line(0,1)4}
\put(7,2){\line(1,1)2}
\put(3,4){\line(-1,1)2}
\put(3,4){\line(1,1)2}
\put(3,8){\line(-1,-1)2}
\put(3,8){\line(0,-1)4}
\put(3,8){\line(1,-1)2}
\put(3,8){\line(2,-1)4}
\put(5,4){\line(-2,1)4}
\put(5,4){\line(1,1)2}
\put(5,6){\line(2,-1)4}
\put(7,4){\line(-2,1)4}
\put(7,6){\line(1,-1)2}
\put(6.85,1.3){$0$}
\put(2.85,3.3){$a$}
\put(5.5,3.85){$b$}
\put(7.35,3.85){$c$}
\put(9.35,3.85){$d$}
\put(.35,5.85){$c'$}
\put(2.35,5.85){$d'$}
\put(4.1,5.85){$b'$}
\put(7.45,5.85){$a'$}
\put(2.35,8.3){$1=0'$}
\put(4.35,.6){{\rm Fig.~1}}
\end{picture}
\]

\vspace*{-3mm}

It is evident that the involution $'$ is a complementation. However, $\mathbf L$ is not orthomodular since $'$ is not antitone: We have $b\leq c'$, but $c''=c\not\leq b'$.
\end{example}

We can state the following result showing how to organize such a lattice into left residuated one.

\begin{theorem}\label{th2}
Let $(L,\vee,\wedge,{}',0,1)$ be a complemented modular lattice and put
\begin{align*}
      x\odot y & :=(x\vee y')\wedge y, \\
x\rightarrow y & :=(x\wedge y)\vee x'
\end{align*}
for all $x,y\in L$. Then $(L,\vee,\wedge,\odot,\rightarrow,0,1)$ is a divisible left residuated lattice.
\end{theorem}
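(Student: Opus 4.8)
\textit{Proof plan.} The plan is to verify conditions (i)--(iii) of the definition of a left residuated lattice together with the divisibility identity directly, using only that $'$ is a complementation (so $x\vee x'=1$, $x\wedge x'=0$, and in particular $1'=0$) and that the lattice is modular. Condition (i) holds by hypothesis. Condition (ii) is an immediate computation: $x\odot1=(x\vee1')\wedge1=(x\vee0)\wedge1=x$ and $1\odot x=(1\vee x')\wedge x=1\wedge x=x$.

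For divisibility I would compute $(x\rightarrow y)\odot x$. Writing $w:=x\rightarrow y=(x\wedge y)\vee x'$ and noting $w\vee x'=w$, we get $(x\rightarrow y)\odot x=(w\vee x')\wedge x=w\wedge x=\big((x\wedge y)\vee x'\big)\wedge x$. Since $x\wedge y\leq x$, the modular law yields $\big((x\wedge y)\vee x'\big)\wedge x=(x\wedge y)\vee(x'\wedge x)=(x\wedge y)\vee0=x\wedge y$, as required. This is one of the two places where modularity is genuinely used.

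The main point is left adjointness (iii): $(x\vee y')\wedge y\leq z$ iff $x\leq(y\wedge z)\vee y'$. For the forward implication, from $(x\vee y')\wedge y\leq z$ and $(x\vee y')\wedge y\leq y$ one gets $(x\vee y')\wedge y\leq y\wedge z$, hence $\big((x\vee y')\wedge y\big)\vee y'\leq(y\wedge z)\vee y'$; since $y'\leq x\vee y'$, the modular law gives $\big((x\vee y')\wedge y\big)\vee y'=(x\vee y')\wedge(y\vee y')=(x\vee y')\wedge1=x\vee y'$, and $x\leq x\vee y'$ finishes it. For the converse, from $x\leq(y\wedge z)\vee y'$ and $y'\leq(y\wedge z)\vee y'$ we obtain $x\vee y'\leq(y\wedge z)\vee y'$, so $(x\vee y')\wedge y\leq\big((y\wedge z)\vee y'\big)\wedge y$; since $y\wedge z\leq y$, the modular law gives $\big((y\wedge z)\vee y'\big)\wedge y=(y\wedge z)\vee(y'\wedge y)=(y\wedge z)\vee0\leq z$.

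I do not expect a serious obstacle here: the whole argument is a short chain of modular-law manipulations. The only slightly delicate points are bookkeeping — making sure in each application of the modular law that the relevant element does lie below the one it must, which is why the displayed operations are arranged as $(x\vee y')\wedge y$ and $(x\wedge y)\vee x'$ rather than their obvious variants — and observing that only the complementation identities $x\vee x'=1$ and $x\wedge x'=0$ are needed, not antitonicity of $'$ nor the involution law $x''\approx x$, so the result genuinely covers non-orthomodular examples such as the one in Fig.~1.
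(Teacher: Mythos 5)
Your proof is correct and follows essentially the same route as the paper's: the unit laws and divisibility are verified by the same direct computations (modularity applied with $x\wedge y\leq x$), and both directions of left adjointness use exactly the same modular-law manipulations ($y'\leq x\vee y'$ for the forward implication, $y\wedge z\leq y$ for the converse), together with only the complementation identities. No gaps; the bookkeeping of where modularity applies is handled correctly.
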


\begin{proof}
We have
\begin{align*}
                     0' & \approx0\vee0'\approx1, \\
                     1' & \approx1\wedge1'\approx0, \\
                x\odot1 & \approx(x\vee1')\wedge1\approx x, \\
               1\odot x & \approx(1\vee x')\wedge x\approx x, \\
(x\rightarrow y)\odot x & \approx(((x\wedge y)\vee x')\vee x')\wedge x\approx((x\wedge y)\vee x')\wedge x\approx(x\wedge y)\vee(x'\wedge x)\approx x\wedge y.
\end{align*}
Now let $a,b,c\in L$. If $a\odot b\leq c$ then
\begin{align*}
a & \leq a\vee b'=(b'\vee b)\wedge(a\vee b')=b'\vee(b\wedge(a\vee b'))=(b\wedge(a\vee b'))\vee b'= \\
  & =(b\wedge(a\odot b))\vee b'\leq(b\wedge c)\vee b'=b\rightarrow c.
\end{align*}
If, conversely, $a\leq b\rightarrow c$ then
\begin{align*}
a\odot b & =(a\vee b')\wedge b\leq((b\rightarrow c)\vee b')\wedge b=((b\wedge c)\vee b')\wedge b=(b\wedge c)\vee(b'\wedge b)=b\wedge c\leq \\
         & \leq c.
\end{align*}
\end{proof}

Now we turn our attention to complemented posets. We will investigate under which conditions a certain modification of a complemented modular poset can be organized into a so-called operator left residuated structure. At first, we recall all the necessary definitions and concepts.

Let $(P,\leq)$ be a poset. For arbitrary $A\subseteq P$ put
\begin{align*}
L(A) & :=\{x\in P\mid x\leq y\text{ for all }y\in A\}, \\
U(A) & :=\{x\in P\mid x\geq y\text{ for all }y\in A\}.
\end{align*}
We write $L(a,b)$, $L(a,A)$, $L(A,B)$, $LU(A)$ instead of $L(\{a,b\})$, $L(\{a\}\cup A)$, $L(A\cup B)$, $L(U(A))$, respectively. Analogously, we proceed in similar cases.

The following useful concept was introduced in \cite{CL18}:

\begin{definition}
An {\em operator left residuated poset} is an ordered seventuple $\mathbf P=(P,\leq,{}',M,R,0,$ $1)$ where $(P,\leq,{}',0,1)$ is a bounded poset with a unary operation and $M$ and $R$ are mappings from $P^2$ to $2^P$ satisfying the following conditions for all $x,y,z\in P$:
\begin{enumerate}
\item[{\rm(i)}] $M(x,1)\approx M(1,x)\approx L(x)$,
\item[{\rm(ii)}] $M(x,y)\subseteq L(z)$ if and only if $L(x)\subseteq R(y,z)$,
\item[{\rm(iii)}] $R(x,0)\approx L(x')$.
\end{enumerate}
Condition {\rm(ii)} is called {\em operator left adjointness}. If $M$ is commutative then {\rm(ii)} is called {\em operator adjointness} and $\mathbf P$ is called an {\em operator residuated poset}. The operator left residuated poset $\mathbf P$ is called {\em divisible} if it satisfies the LU-identity $M(R(x,y),x)\approx L(x,y)$.
\end{definition}

It is easy to prove that $x\leq y$ is equivalent to $R(x,y)=P$ (see e.g.\ \cite{CLa}).

A {\em poset} $(P,\leq)$ is called
\begin{itemize}
\item {\em modular} if $L(U(x,y),z)=LU(x,L(y,z))$ for all $x,y,z\in P$ with $x\leq z$,
\item{distributive} if it satisfies one of the following equivalent LU-identities:
\begin{align*}
 L(U(x,y),z) & \approx LU(L(x,z),L(y,z)), \\
LU(L(x,y),z) & \approx L(U(x,z),U(y,z)).
\end{align*}
\end{itemize}
A unary operation $'$ on a bounded poset $(P,\leq,0,1)$ is called a {\em complementation} if it satisfies the LU-identities $U(x,x')\approx\{1\}$ and $L(x,x')=\{0\}$. A bounded poset $(P,\leq,{}',0,1)$ with a unary operation is called
\begin{itemize}
\item an {\em orthoposet} if $'$ is a complementation and an antitone involution,
\item a {\em Boolean poset} if it is a distributive orthoposet.
\end{itemize}
It was shown in \cite{CLa} that every Boolean poset can be organized into an operator residuated one by means of $M(x,y):=L(x,y)$ and $R(x,y):=LU(x',y)$. Moreover, every pseudo-orthomodular poset can be converted into an operator left residuated poset. The aim of this paper is to find a suitable modification of the notion of modularity in posets such that these posets can be converted into operator left residuated posets. The first suitable candidate for such a modification is as follows:

\begin{definition}
A poset $(P,\leq)$ is called {\em strongly modular} if it satisfies the LU-identities
\begin{eqnarray}
L(U(x,y),U(x,z)) & \approx & LU(x,L(y,U(x,z))),\label{equ3} \\
L(U(L(x,z),y),z) & \approx & LU(L(x,z),L(y,z)).\label{equ4}
\end{eqnarray}
\end{definition}

It is easy to see that strong modularity implies modularity.

Of course, every modular lattice as well as every Boolean poset is a strongly modular poset. An example of a strongly modular poset which is neither a lattice nor a Boolean poset is the direct product $\mathbf P=\mathbf L\times\mathbf B$ where $\mathbf L$ is the lattice depicted in Fig.~1 (as a poset) and $\mathbf B$ is the poset visualized in Fig.~2.

\vspace*{3mm}

\[
\setlength{\unitlength}{7mm}
\begin{picture}(6,8)
\put(3,0){\circle*{.3}}
\put(0,2){\circle*{.3}}
\put(2,2){\circle*{.3}}
\put(4,2){\circle*{.3}}
\put(6,2){\circle*{.3}}
\put(0,4){\circle*{.3}}
\put(6,4){\circle*{.3}}
\put(0,6){\circle*{.3}}
\put(2,6){\circle*{.3}}
\put(4,6){\circle*{.3}}
\put(6,6){\circle*{.3}}
\put(3,8){\circle*{.3}}
\put(3,0){\line(-3,2)3}
\put(3,0){\line(-1,2)1}
\put(3,0){\line(1,2)1}
\put(3,0){\line(3,2)3}
\put(3,8){\line(-3,-2)3}
\put(3,8){\line(-1,-2)1}
\put(3,8){\line(1,-2)1}
\put(3,8){\line(3,-2)3}
\put(0,2){\line(0,1)4}
\put(6,2){\line(0,1)4}
\put(0,4){\line(1,1)2}
\put(0,2){\line(1,1)4}
\put(2,2){\line(1,1)4}
\put(4,2){\line(1,1)2}
\put(2,2){\line(-1,1)2}
\put(4,2){\line(-1,1)4}
\put(6,2){\line(-1,1)4}
\put(6,4){\line(-1,1)2}
\put(2.85,-.75){$0$}
\put(-.6,1.9){$a$}
\put(1.2,1.9){$b$}
\put(4.45,1.9){$c$}
\put(6.4,1.9){$d$}
\put(-.6,3.9){$e$}
\put(6.4,3.9){$e'$}
\put(-.7,5.9){$d'$}
\put(1.2,5.9){$c'$}
\put(4.45,5.9){$b'$}
\put(6.4,5.9){$a'$}
\put(2.85,8.4){$1$}
\put(2.3,-2){{\rm Fig.\ 2}}
\end{picture}
\]

\vspace*{13mm}

Moreover this poset is not orthomodular.

Now we can prove the following theorem.

\begin{theorem}\label{th1}
Let $(P,\leq,{}',0,1)$ be a bounded strongly modular poset with a complementation and put
\begin{align*}
M(x,y) & :=L(U(x,y'),y), \\
R(x,y) & :=LU(L(x,y),x')
\end{align*}
for all $x,y\in P$. Then $(P,\leq,{}',M,R,0,1)$ is a divisible operator left residuated poset.
\end{theorem}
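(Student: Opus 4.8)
The strategy is to mimic, at the level of the operators $L$ and $U$, the purely lattice-theoretic computation in the proof of Theorem~\ref{th2}, replacing each lattice identity by the corresponding LU-identity valid in a strongly modular poset with complementation. Concretely, I first verify the boundary conditions: $M(x,1)\approx L(U(x,1'),1)\approx L(U(x,0),1)\approx L(x)$ using $0'=0$ (which follows from $L(0,0')=\{0\}$ together with $U(0,0')=\{1\}$, hence $1'=0$ and dually $0'=1$; one has to be slightly careful here), and similarly $M(1,x)\approx L(U(1,x'),x)\approx L(1,x)\approx L(x)$. For condition~(iii) I compute $R(x,0)\approx LU(L(x,0),x')\approx LU(\{0\},x')\approx LU(x')=L(x')$, the last step because $x'$ is a single element.

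Next comes divisibility, the LU-analogue of $(x\rightarrow y)\odot x\approx x\wedge y$. I must show $M(R(x,y),x)\approx L(x,y)$, i.e.
\[
L\bigl(U(LU(L(x,y),x'),x'),x\bigr)\approx L(x,y).
\]
The lattice proof collapsed $((x\wedge y)\vee x'\vee x')\wedge x$ to $(x\wedge y)\vee(x'\wedge x)=x\wedge y$ using idempotency of $x'$ under join and the complementation law $x'\wedge x=0$. In the poset I expect to use the second strong-modularity identity~(\ref{equ4}), $L(U(L(x,z),y),z)\approx LU(L(x,z),L(y,z))$: setting the parameters appropriately (with $x'$ playing the role of $y$, and $x$ the role of $z$) should turn the left-hand side into $LU(L(x,y,x),L(x',x))\approx LU(L(x,y),\{0\})\approx LU(L(x,y))\approx L(x,y)$, using $L(x',x)=\{0\}$ and $LU$ of a lower set with a least element. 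The bookkeeping with nested $L$'s and $U$'s is the delicate part, and I will need the standard facts $L U L=L$, $ULU=U$, $L(A)=L(U(L(A)))$ on the single-element sets involved.

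Finally, operator left adjointness: $M(x,y)\subseteq L(z)\iff L(x)\subseteq R(y,z)$. For the forward direction, assuming $M(x,y)=L(U(x,y'),y)\subseteq L(z)$, I follow the computation in Theorem~\ref{th2}: there one rewrote $a\vee b'=(b'\vee b)\wedge(a\vee b')=b'\vee(b\wedge(a\vee b'))$ by modularity and then bounded $b\wedge(a\vee b')=b\wedge(a\odot b)\leq b\wedge c$. The poset version should use the first strong-modularity identity~(\ref{equ3}) to express $LU(x,y')$ (the analogue of $a\vee b'$) as $LU(y',L(y,U(x,y')))$ or similar, then use $L(y,U(x,y'))=L(U(x,y'),y)=M(x,y)\subseteq L(z)$ to replace it by something contained in $L(y,z)$, yielding $L(x)\subseteq LU(x,y')\subseteq LU(y',L(y,z))=LU(L(y,z),y')=R(y,z)$. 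For the converse, assuming $L(x)\subseteq R(y,z)=LU(L(y,z),y')$, I compute $M(x,y)=L(U(x,y'),y)\subseteq L(U(LU(L(y,z),y'),y'),y)$, and the right-hand side should reduce via~(\ref{equ4}) to $LU(L(y,z),L(y',y))\approx LU(L(y,z))\approx L(y,z)\subseteq L(z)$, exactly paralleling $a\odot b\leq b\wedge c\leq c$.

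**Main obstacle.**
The genuine difficulty is not the overall shape of the argument — that is dictated by Theorem~\ref{th2} — but verifying that each step of that lattice computation has a valid LU-counterpart. In a lattice every step is an identity; in a poset some of those manipulations only become available through~(\ref{equ3}) or~(\ref{equ4}), and only after inserting the right nesting of $L$ and $U$ so that the hypotheses of those identities (e.g.\ the side condition $x\le z$ in modularity, or the specific form $L(x,z)$ appearing inside~(\ref{equ4})) are actually met. I expect the converse direction of adjointness and the divisibility identity to be where one must be most careful about which of the two strong-modularity identities applies and about the interplay of the operator laws $ULU=U$, $LUL=L$ with singleton sets such as $\{x'\}$ and $\{0\}$.
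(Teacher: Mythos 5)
Your proposal is correct and follows essentially the same route as the paper's own proof: boundary conditions from the complementation LU-identities (giving $1'=0$, $0'=1$), divisibility and the converse half of operator left adjointness via identity~(\ref{equ4}) combined with $L(x,x')=\{0\}$, $ULU=U$ and $LUL=L$, and the forward half via identity~(\ref{equ3}) combined with $U(x,x')=\{1\}$, exactly as the paper does. The only blemish is the garbled justification of $1'=0$ (it follows from $L(1,1')=\{0\}$, not from the instance at $0$), which is a harmless slip you already flagged.
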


\begin{proof}
We have
\begin{align*}
      U(0') & \approx U(0,0')=\{1\},\text{ i.e.\ }0'\approx1, \\
      L(1') & \approx L(1,1')=\{0\},\text{ i.e.\ }1'\approx0, \\
     M(x,1) & \approx L(U(x,1'),1)\approx L(x), \\
     M(1,x) & \approx L(U(1,x'),x)\approx L(x), \\
     R(x,0) & \approx LU(L(x,0),x')\approx L(x'), \\
M(R(x,y),x) & \approx L(U(LU(L(x,y),x'),x'),x)\approx L(ULU(L(x,y),x')\cap U(x'),x)\approx \\
            & \approx L(U(L(x,y),x')\cap U(x'),x)\approx L(U(L(x,y),x'),x)\approx \\
            & \approx L(U(L(y,x),x'),x)\approx LU(L(y,x),L(x',x))\approx LUL(y,x)\approx L(y,x)\approx \\
            & \approx L(x,y).
\end{align*}
Let $a,b,c\in P$. If $M(a,b)\subseteq L(c)$ then, according to (\ref{equ3}),
\begin{align*}
L(a) & =LU(a)\subseteq LU(b',a)=L(U(b',b),U(b',a))=LU(b',L(b,U(b',a)))= \\
     & =LU(L(b)\cap L(U(a,b'),b),b')=LU(L(b)\cap M(a,b),b')\subseteq LU(L(b)\cap L(c),b')= \\
		 & =LU(L(b,c),b')=R(b,c).
\end{align*}
If, conversely, $L(a)\subseteq R(b,c)$ then, according to (\ref{equ4}),
\begin{align*}
M(a,b) & =L(U(a,b'),b)=L(U(L(a),b'),b)\subseteq L(U(R(b,c),b'),b)= \\
       & =L(U(LU(L(b,c),b'),b'),b)=L(ULU(L(b,c),b')\cap U(b'),b)= \\
			 & =L(U(L(b,c),b')\cap U(b'),b)=L(U(L(b,c),b'),b)=L(U(L(c,b),b'),b)= \\
			 & =LU(L(c,b),L(b',b))=LUL(c,b)=L(c,b)\subseteq L(c).
\end{align*}
\end{proof}

Another appropriate modification of the notion of a modular poset is the following:

Let $\mathbf P=(P,\leq)$ be a poset, $a\in P$ and $A,B\subseteq P$. We denote by $A\leq B$ the fact that $x\leq y$ for all $x\in A$ and $y\in B$. Instead of $\{a\}\leq A$ or $A\leq\{a\}$ we simply write $a\leq A$ or $A\leq a$, respectively.

\begin{definition}
A poset $(P,\leq)$ is called {\em strictly modular} if for all $x,y,z\in P$ and $X,Z\subseteq P$ we have
\begin{eqnarray}
& & x\leq Z\text{ implies }L(U(x,y),Z)=LU(x,L(y,Z)),\label{equ1} \\
& & L(X)\leq z\text{ implies }L(U(L(X),y),z)=LU(L(X),L(y,z)).\label{equ2}
\end{eqnarray}
\end{definition}

It is easy to see that strict modularity implies modularity.

\begin{lemma}
Let $\mathbf L=(L,\vee,\wedge)$ be a complete modular lattice. Then $\mathbf L$ is strictly modular.
\end{lemma}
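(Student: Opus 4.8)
The plan is to use completeness to rewrite every term built from the operators $L$ and $U$ as a principal ideal or filter, after which both LU-identities $(\ref{equ1})$ and $(\ref{equ2})$ collapse to the ordinary modular law $x\le z\Rightarrow(x\vee y)\wedge z=x\vee(y\wedge z)$. First I would record the elementary facts that for every $A\subseteq L$ one has $L(A)=L(\bigwedge A)$ and $U(A)=U(\bigvee A)$ (with $\bigwedge\emptyset=1$ and $\bigvee\emptyset=0$), whence $LU(A)=L(\bigvee A)$, $UL(A)=U(\bigwedge A)$, $\bigvee L(A)=\bigwedge A$ and $\bigwedge U(A)=\bigvee A$; also $L(A,B)=L(A)\cap L(B)=L(\bigwedge A\wedge\bigwedge B)$ and dually for $U$. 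All of these follow directly from the definitions of $L,U$ together with the existence of arbitrary meets and joins.

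For $(\ref{equ1})$, set $w:=\bigwedge Z$; then the hypothesis $x\le Z$ is exactly $x\le w$. Using the facts above, $L(U(x,y),Z)=L(U(x\vee y))\cap L(w)=L((x\vee y)\wedge w)$, while $L(y,Z)=L(y\wedge w)$ gives $U(x,L(y,Z))=U(x\vee(y\wedge w))$ and hence $LU(x,L(y,Z))=L(x\vee(y\wedge w))$. Thus $(\ref{equ1})$ is equivalent to $(x\vee y)\wedge w=x\vee(y\wedge w)$, which holds by modularity since $x\le w$.

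For $(\ref{equ2})$, set $v:=\bigwedge X$, so that $L(X)=L(v)$, $v$ is the greatest element of $L(X)$, and $\bigvee L(X)=v$; the hypothesis $L(X)\le z$ therefore amounts to $v\le z$. Then $U(L(X),y)=U(v)\cap U(y)=U(v\vee y)$, so $L(U(L(X),y),z)=L(v\vee y)\cap L(z)=L((v\vee y)\wedge z)$; and $L(y,z)=L(y\wedge z)$ gives $LU(L(X),L(y,z))=L(v\vee(y\wedge z))$. Hence $(\ref{equ2})$ reduces to $(v\vee y)\wedge z=v\vee(y\wedge z)$, again an instance of the modular law since $v\le z$. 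Both identities being verified, $\mathbf L$ is strictly modular.

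There is no serious obstacle here; the only points that need a little care are the order-theoretic translations --- in particular $\bigvee L(X)=\bigwedge X$, and the reading of the premises $x\le Z$ and $L(X)\le z$ as $x\le\bigwedge Z$ and $\bigwedge X\le z$ --- together with the degenerate cases $Z=\emptyset$ or $X=\emptyset$, which are handled automatically by the conventions $\bigwedge\emptyset=1$ and $\bigvee\emptyset=0$.
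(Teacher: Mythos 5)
Your proof is correct and follows essentially the same route as the paper: both use completeness to replace $L(Z)$, $U(x,y)$, $U(L(X))$, etc.\ by principal ideals and filters ($L(\bigwedge Z)$, $U(x\vee y)$, $U(\bigwedge X)$), read the hypotheses as $x\le\bigwedge Z$ and $\bigwedge X\le z$, and reduce both LU-identities to the ordinary modular law. The only difference is presentational: you isolate the translation facts up front, while the paper carries them out inside two chains of equalities.
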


\begin{proof}
Let $a,b,c\in L$ and $A,C\subseteq L$. If $a\leq C$ then
\begin{align*}
L(U(a,b),C) & =L(U(a\vee b),C)=L(a\vee b,\bigwedge C)=L((a\vee b)\wedge\bigwedge C)= \\
& =L(a\vee(b\wedge\bigwedge C))=LU(a\vee(b\wedge\bigwedge C))=LU(a,b\wedge\bigwedge C)= \\
& =LU(a,L(b\wedge\bigwedge C))=LU(a,L(b,\bigwedge C))=LU(a,L(b,C)).
\end{align*}
If $L(A)\leq c$ then
\begin{align*}
L(U(L(A),b),c) & =L(U(\bigwedge A,b),c)=L(U(\bigwedge A\vee b),c)=L(\bigwedge A\vee b,c)= \\
& =L((\bigwedge A\vee b)\wedge c)=L(\bigwedge A\vee(b\wedge c))=LU(\bigwedge A\vee(b\wedge c))= \\
& =LU(\bigwedge A,b\wedge c)=LU(L(A),L(b,c)).
\end{align*}
\end{proof}

Hence every finite modular lattice is a strictly modular poset. The poset $\mathbf P_6$ visualized in Fig.~3

\vspace*{-7mm}

\[
\setlength{\unitlength}{7mm}
\begin{picture}(6,9)
\put(3,2){\circle*{.3}}
\put(1,4){\circle*{.3}}
\put(5,4){\circle*{.3}}
\put(1,6){\circle*{.3}}
\put(5,6){\circle*{.3}}
\put(3,8){\circle*{.3}}
\put(3,2){\line(-1,1)2}
\put(3,2){\line(1,1)2}
\put(1,6){\line(0,-1)2}
\put(1,6){\line(2,-1)4}
\put(1,6){\line(1,1)2}
\put(5,6){\line(0,-1)2}
\put(5,6){\line(-2,-1)4}
\put(5,6){\line(-1,1)2}
\put(2.875,1.25){$0$}
\put(.35,3.85){$a$}
\put(5.4,3.85){$b$}
\put(.35,5.85){$c$}
\put(5.4,5.85){$d$}
\put(2.85,8.4){$1$}
\put(2.2,.3){{\rm Fig.~3}}
\end{picture}
\]

\vspace*{-3mm}

is also strictly modular and even distributive. Hence, for a finite modular non-distributive lattice $\mathbf L$, the direct product $\mathbf L\times\mathbf P_6$ is a strictly modular bounded poset which is neither a lattice nor distributive.

\begin{theorem}\label{th4}
Let $\mathbf P=(P,\leq,{}',0,1)$ be a bounded strictly modular poset with complementation and put
\begin{align*}
M(x,y) & :=L(U(x,y'),y), \\
R(x,y) & :=LU(L(x,y),x')
\end{align*}
for all $x,y\in P$. Then $(P,\leq,{}',M,R,0,1)$ is a divisible operator left residuated poset.
\end{theorem}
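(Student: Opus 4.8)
The plan is to follow the proof of Theorem~\ref{th1} essentially line by line: the operators $M$ and $R$ are given by the same formulas, so the only modification is that wherever the proof of Theorem~\ref{th1} uses the unconditional LU-identity~(\ref{equ3}) we use~(\ref{equ1}) instead, and wherever it uses~(\ref{equ4}) we use~(\ref{equ2}). The one genuinely new point to check is that, at each such place, the side condition of the conditional identity being applied ($x\leq Z$ for~(\ref{equ1}), $L(X)\leq z$ for~(\ref{equ2})) is fulfilled by the concrete subsets occurring there.

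First I would dispose of everything that does not involve modularity. From $U(1,1')=\{1\}$ and $L(1,1')=\{0\}$ one reads off $0'=1$ and $1'=0$; then, using only the complementation laws together with the elementary facts $U(a)=UL(a)$, $L(a)=LU(a)$, $ULU=U$, $LUL=L$, $U(0)=P$, $L(1)=P$, one gets $M(x,1)=L(U(x,1'),1)=L(U(x),1)=L(x)$, $M(1,x)=L(U(1,x'),x)=L(\{1\},x)=L(x)$ and $R(x,0)=LU(L(x,0),x')=LU(\{0\},x')=LU(x')=L(x')$, exactly as in Theorem~\ref{th1}.

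For divisibility, I would run the same purely formal chain as in Theorem~\ref{th1} to bring $M(R(x,y),x)$ to the form $L(U(L(y,x),x'),x)$ (collapsing $ULU(L(x,y),x')\cap U(x')$ to $U(L(x,y),x')$ and absorbing the redundant intersection), and then apply~(\ref{equ2}) with $X:=\{x,y\}$ (so $L(X)=L(y,x)$), with the free variable playing the role of ``$y$'' taken to be $x'$, and with $z:=x$; its side condition $L(X)=L(x,y)\subseteq L(x)$, i.e.\ $L(X)\leq x$, holds, so using $L(x',x)=\{0\}$ one obtains $L(U(L(y,x),x'),x)=LU(L(y,x),L(x',x))=LUL(y,x)=L(y,x)=L(x,y)$. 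For the forward direction of operator left adjointness, assume $M(a,b)\subseteq L(c)$; beginning from $L(a)=LU(a)\subseteq LU(b',a)=L(U(b',b),U(b',a))$ (the last step because $U(b',b)=\{1\}$), I apply~(\ref{equ1}) with $x:=b'$, $y:=b$, $Z:=U(b',a)$, whose side condition $b'\leq U(b',a)$ holds since $U(b',a)\subseteq U(b')$; from there the computation of Theorem~\ref{th1} carries over unchanged, using $L(b,U(b',a))=L(b)\cap L(U(a,b'),b)=L(b)\cap M(a,b)$ and monotonicity, to yield $L(a)\subseteq LU(L(b)\cap M(a,b),b')\subseteq LU(L(b,c),b')=R(b,c)$.

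For the converse direction, assume $L(a)\subseteq R(b,c)$; then $M(a,b)=L(U(L(a),b'),b)\subseteq L(U(R(b,c),b'),b)=L(U(L(c,b),b'),b)$ after the same formal simplifications as in Theorem~\ref{th1}, and now I apply~(\ref{equ2}) with $X:=\{c,b\}$, the free variable playing the role of ``$y$'' taken to be $b'$, and $z:=b$; the side condition $L(c,b)\subseteq L(b)$, i.e.\ $L(X)\leq b$, holds, so $M(a,b)\subseteq LU(L(c,b),L(b',b))=LUL(c,b)=L(c,b)=L(b,c)\subseteq L(c)$. The only place where difficulty could arise is the verification of the three side conditions, but each has the trivial form ``$L$ of a set containing $x$ lies in $L(x)$'' or ``the cone $U(b',a)$ lies above $b'$''; what is genuinely used is that strict modularity ranges over \emph{all} subsets $X$ and $Z$ --- in particular over the two-element sets $\{x,y\}$, $\{c,b\}$ and the cone $U(b',a)$, none of which need be principal in a poset --- so no real obstacle remains and the proof is complete.
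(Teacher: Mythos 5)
Your proposal is correct and follows essentially the same route as the paper's own proof, which likewise repeats the computation of Theorem~\ref{th1} verbatim and simply invokes (\ref{equ1}) and (\ref{equ2}) in place of (\ref{equ3}) and (\ref{equ4}) at the corresponding steps. Your explicit verification of the side conditions ($L(x,y)\leq x$, $b'\leq U(b',a)$, $L(c,b)\leq b$) is a welcome addition that the paper leaves implicit, but it changes nothing in the substance of the argument.
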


\begin{proof}
We have $M(x,1)\approx M(1,x)\approx L(x)$ and $R(x,0)\approx L(x')$ as in the proof of Theorem~\ref{th1}. Moreover,
\begin{align*}
M(R(x,y),x) & \approx L(U(LU(L(x,y),x'),x'),x)\approx L(ULU(L(x,y),x')\cap U(x'),x)\approx \\
            & \approx L(U(L(x,y),x')\cap U(x'),x)\approx L(U(L(x,y),x'),x)\approx \\
						& \approx LU(L(x,y),L(x',x))\approx LUL(x,y)\approx L(x,y).
\end{align*}
Let $a,b,c\in P$. If $M(a,b)\subseteq L(c)$ then, according to (\ref{equ1}),
\begin{align*}
L(a) & =LU(a)\subseteq LU(a,b')=L(U(b',b),U(a,b'))=LU(b',L(b,U(a,b')))= \\
     & =LU(L(b)\cap L(U(a,b'),b),b')=LU(L(b)\cap M(a,b),b')\subseteq LU(L(b)\cap L(c),b')= \\
		 & =LU(L(b,c),b')=R(b,c).
\end{align*}
Conversely, if $L(a)\subseteq R(b,c)$ then, according to (\ref{equ2}),
\begin{align*}
M(a,b) & =L(U(a,b'),b)=L(U(L(a),b'),b)\subseteq L(U(R(b,c),b'),b)= \\
       & =L(U(LU(L(b,c),b'),b'),b)=L(ULU(L(b,c),b')\cap U(b'),b)= \\
       & =L(U(L(b,c),b')\cap U(b'),b)=L(U(L(b,c),b'),b)=LU(L(b,c),L(b',b))= \\
			 & =LUL(b,c)=L(b,c)\subseteq L(c).
\end{align*}
\end{proof}

Both strong modularity and strict modularity are generalizations of the concept of modularity. The question arises if the Dedekind-MacNeille completion of such posets is a modular lattice. In the remaining part of the paper we present some results connected with this question.

Let $\mathbf P=(P,\leq)$ be a poset. Put $D(\mathbf P):=\{A\subseteq P\mid LU(A)=A\}$. Then $\mathbf D(\mathbf P):=(D(\mathbf P),\subseteq)$ is a complete lattice which can be considered as an extension of $\mathbf P$ if one identifies $x$ with $L(x)$ for every $x\in P$. The lattice $\mathbf D(\mathbf P)$ is called the {\em Dedekind-MacNeille completion} of $\mathbf P$. We have
\begin{align*}
  A\vee B & =LU(A,B), \\
A\wedge B & =A\cap B
\end{align*}
for all $A,B\in D(\mathbf P)$. Let $\mathbf D_0(\mathbf P)=(D_0(\mathbf P),\subseteq)$ denote the sublattice of $\mathbf D(\mathbf P)$ generated by $P$ (which is identified with $\{L(x)\mid x\in P\}$).

The following theorem uses a construction which was also used in \cite{H96}.

\begin{theorem}\label{th3}
Let $\mathbf P=(P,\leq,{}',0,1)$ be an orthoposet. Then $'$ can be extended to an orthocomplementation $^*$ on $\mathbf D_0(\mathbf P)$ and further to an orthocomplementation $^*$ on $\mathbf D(\mathbf P)$.
\end{theorem}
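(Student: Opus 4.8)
The statement requires extending the orthocomplementation $'$ on $P$ first to $\mathbf D_0(\mathbf P)$ and then to $\mathbf D(\mathbf P)$. I would define, for $A\in D(\mathbf P)$,
\[
A^*:=\{x\in P\mid x'\in U(A)\}=\{x\in P\mid a\leq x'\text{ for all }a\in A\}.
\]
Using that $'$ is an antitone involution on $P$, one checks $A^*=L\bigl(\{a'\mid a\in A\}\bigr)$, so $A^*\in D(\mathbf P)$ automatically (it is of the form $L(\cdot)$), and that $^*$ is order-reversing: $A\subseteq B$ implies $B^*\subseteq A^*$. Next I would verify it is an involution on $\mathbf D(\mathbf P)$: since $A=LU(A)$ for $A\in D(\mathbf P)$, applying the definition twice together with $x''=x$ gives $A^{**}=LU(A)=A$. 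The bulk of the argument is showing the orthocomplement laws $A\vee A^*=P$ (the top of $\mathbf D(\mathbf P)$) and $A\wedge A^*=L(0)=\{0\}$ (the bottom). For the meet, $A\cap A^*$ consists of those $x\in P$ with $x\in A$ and $a\leq x'$ for all $a\in A$; taking $a=x$ gives $x\leq x'$, hence $x\in L(x,x')=\{0\}$ by the LU-identity defining a complementation in a poset. For the join, $A\vee A^*=LU(A\cup A^*)$, and I would show $U(A\cup A^*)=\{1\}$: if $x\geq A$ and $x\geq A^*$, then since $a'\in A^*$ for every $a\in A$ (because $(a')'=a\in A\subseteq U(A)$... one must be a little careful here — $a'\in A^*$ iff $a''=a\in U(A)$, which holds iff $A$ is an up-set of its own elements, i.e.\ always, since trivially $a\geq a$ fails to give $a\in U(A)$ unless $A=\{a\}$).

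**The subtlety I expect to be the real obstacle.** The naive claim ``$a'\in A^*$ for all $a\in A$'' is false in general, so the join computation needs the structure more carefully. The correct route: $A\vee A^* = LU(A\cup A^*)$, and by De Morgan-type reasoning $U(A\cup A^*) = U(A)\cap U(A^*)$. Now $U(A^*) = U\bigl(L(\{a'\mid a\in A\})\bigr)$. I would instead exploit that $\mathbf D(\mathbf P)$ is a complete lattice and argue directly on joins: for each $a\in A$ one has $L(a)\vee L(a)^* = L(a)\vee L(a') = LU(a,a') = L(U(a,a')) = L(1) = P$ using $U(a,a')=\{1\}$. Since $L(a)\subseteq A$ and, by antitonicity of $^*$ applied to $A\supseteq L(a)$, we get $A^*\subseteq L(a)^* = L(a')$... that is the wrong direction. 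The fix is the standard one for Dedekind--MacNeille completions of orthoposets: show $U(A)^* $-type identities via $U(A) = \bigcap_{a\in A} U(a)$ and hence $A^* = L\bigl(\bigcup_{a\in A}\{a'\}\bigr) = \bigcap_{a\in A} L(a')= \bigcap_{a\in A} L(a)^*$, i.e.\ $^*$ turns the intersection (meet) defining $A$ into the meet of the complements, and then $A\vee A^* \supseteq L(a)\vee L(a') = P$ already for a single $a$ — \emph{provided $A\neq\emptyset$; the case $A=\emptyset$ gives $A=L(U(\emptyset))=L(P)=\{0\}$ and $A^*=P$, which is fine.} So for $A\neq\{0\}$ pick $a\in A$ with $a\neq 0$; then $L(a)\subseteq A$ forces $A\vee A^*\supseteq L(a)\vee A^*$, but I still need $A^*\supseteq$ something joining with $L(a)$ to $P$. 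Concretely: $a\in A \Rightarrow A^*\subseteq \{x\mid a\leq x'\}=L(a')$, so $A^*\subseteq L(a')$, giving only $A\vee A^*\subseteq$ — again wrong way.

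**Resolving it.** The genuinely correct argument (and the one I'd write) is: $A\vee A^* = LU(A\cup A^*)$, so it equals $P$ iff $U(A\cup A^*)=\{1\}$ iff $U(A)\cap U(A^*)=\{1\}$. Now $U(A^*)$: for $y\in P$, $y\in U(A^*)$ means $y\geq x$ for all $x$ with $x'\in U(A)$; equivalently (substituting $x=z'$, using the involution) $y\geq z'$ for all $z$ with $z\in U(A)$, i.e.\ $y\geq z'$ for all $z\in U(A)$, i.e.\ $y'\leq z$ for all $z\in U(A)$ (antitone), i.e.\ $y'\in L(U(A))=A$. So $U(A^*)=\{y\in P\mid y'\in A\}=(U(A))^*$-flavoured: precisely $U(A^*)=\{y\mid y'\in A\}$. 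Hence $w\in U(A)\cap U(A^*)$ means $w\in U(A)$ and $w'\in A\subseteq LU(A)$, so in particular $w'\leq w$ (as $w\in U(A)$ and $w'\in A$), whence $w\in U(w',w)=\{1\}$ by the complementation LU-identity $U(x,x')=\{1\}$ read with $x=w'$. Thus $U(A\cup A^*)=\{1\}$ and $A\vee A^*=L(1)=P$. Dually the meet computation above gives $A\wedge A^*=\{0\}$. Finally, the statement about $\mathbf D_0(\mathbf P)$ follows because $^*$ maps the generators $L(x)$ to $L(x')\in\{L(p)\mid p\in P\}$, and being a lattice homomorphism-compatible involution it preserves the sublattice generated by the $L(x)$'s; since $\mathbf D_0(\mathbf P)\subseteq\mathbf D(\mathbf P)$ and $^*$ restricts to it, one gets the orthocomplementation on $\mathbf D_0(\mathbf P)$ first, then extends verbatim to all of $\mathbf D(\mathbf P)$. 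The main obstacle, as indicated, is getting the quantifier manipulation in the join identity right — everything hinges on the clean computation $U(A^*)=\{y\mid y'\in A\}$, after which the complementation axiom closes the argument immediately.
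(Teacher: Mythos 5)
Your final argument is correct and essentially the paper's own: you define $A^*$ as $L(A')$ (your $\{x\mid x'\in U(A)\}$ is the same set), verify it is an antitone involution satisfying $A\cap A^*=\{0\}$ and $A\vee A^*=P$ via $U(A^*)=\{y\mid y'\in A\}$, check $(L(a))^*=L(a')$, and use the De Morgan/dual-automorphism property to see that the sublattice $\mathbf D_0(\mathbf P)$ generated by the $L(x)$ is closed under $^*$. The exploratory false starts in the middle are harmless since the ``Resolving it'' paragraph supplies the complete, correct computation.
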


\begin{proof}
For every $A\in D(\mathbf P)$ we define
\begin{align*}
 A' & :=\{x'\mid x\in A\}, \\
A^* & :=L(A').
\end{align*}
Then for arbitrary $a\in P$ and $A,B\in D(\mathbf P)$ we have
\begin{itemize}
\item $L(A')=(U(A))'$, $U(A')=(L(A))'$,
\item $A\subseteq B$ implies $A^*=L(A')\supseteq L(B')=B^*$,
\item $A^{**}=L((L(A'))')=L((U(A))'')=LU(A)=A$,
\item $A\vee A^*=LU(A,L(A'))=L(U(A)\cap UL(A'))=L(U(A)\cap(LU(A))')=L(U(A)\cap A')=L(1)=P$,
\item $A\cap A^*=A\cap L(A')=\{0\}$,
\item $(L(a))^*=L((L(a))')=LU(a')=L(a')$.
\end{itemize}
This shows that $^*$ is an extension of $'$ to an orthocomplementation on $\mathbf D(\mathbf P)$. Since $^*$ is an antitone involution, the De Morgan laws hold and since $^*$ is an extension of $'$ we have that $P$ is closed under $^*$. Because of the De Morgan laws, $D_0(\mathbf P)$ is closed under $^*$, too, and therefore $^*$, restricted to $D_0(\mathbf P)$, is an orthocomplementation on $\mathbf D_0(\mathbf P)$.
\end{proof}

In the following we show which influence the modularity of the lattices $\mathbf D_0(\mathbf P)$ or $\mathbf D(\mathbf P)$ for some poset $\mathbf P$ has on the structure of $\mathbf P$.

\begin{lemma}\label{lem1}
Let $\mathbf P$ be a poset and assume $\mathbf D_0(\mathbf P)$ to be modular. Then $\mathbf P$ is strongly modular.
\end{lemma}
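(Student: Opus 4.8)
The plan is to transport the two defining LU-identities of strong modularity, (\ref{equ3}) and (\ref{equ4}), into lattice identities in $\mathbf D_0(\mathbf P)$ and then to recognise each of them as an instance of the modular law, which holds by hypothesis. Throughout, $x\in P$ is identified with $L(x)\in D_0(\mathbf P)$, and I write $\bar a:=L(a)$.

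First I would set up the dictionary between the operators $L,U$ on $P$ and the lattice operations of $\mathbf D(\mathbf P)$. Since $x$ is the greatest element of $L(x)$ we have $UL(x)=U(x)$, whence for every $x\in P$ and every $S\in D(\mathbf P)$
\[
U(\{x\}\cup S)=U(x)\cap U(S)=UL(x)\cap U(S)=U(L(x)\cup S),
\]
so that $LU(\{x\}\cup S)=LU(L(x)\cup S)=L(x)\vee S$ in $\mathbf D(\mathbf P)$. In particular $LU(x,y)=\bar x\vee\bar y$, while trivially $L(A\cup B)=L(A)\cap L(B)$, i.e.\ intersections of $L$-sets correspond to meets. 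Applying these facts repeatedly shows that every subset of $P$ occurring in (\ref{equ3}) and (\ref{equ4}) is obtained from the principal ideals $\bar x,\bar y,\bar z$ by finitely many applications of $\vee$ and $\wedge$, hence lies in $D_0(\mathbf P)$; this closure property is the point that really needs care, and it is exactly what the identity $LU(\{x\}\cup S)=L(x)\vee S$ guarantees when invoked step by step.

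With the dictionary in hand the computation is routine. For (\ref{equ3}) one gets $L(U(x,y),U(x,z))=(\bar x\vee\bar y)\wedge(\bar x\vee\bar z)$ and $LU(x,L(y,U(x,z)))=\bar x\vee(\bar y\wedge(\bar x\vee\bar z))$, so (\ref{equ3}) is precisely the modular identity in $\mathbf D_0(\mathbf P)$ (in the very form stated for lattices in the paper, with $u=\bar x,v=\bar y,w=\bar z$, noting $\bar x\le\bar x\vee\bar z$). For (\ref{equ4}), putting $p:=\bar x\wedge\bar z=L(x,z)$, one gets $L(U(L(x,z),y),z)=(p\vee\bar y)\wedge\bar z$ and $LU(L(x,z),L(y,z))=p\vee(\bar y\wedge\bar z)$, and since $p\le\bar z$ this is again an instance of the modular law. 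As $\mathbf D_0(\mathbf P)$ is modular, both LU-identities hold, so $\mathbf P$ is strongly modular. Note that neither boundedness nor a complementation of $\mathbf P$ enters; the whole argument is bookkeeping of $L$/$U$-terms together with a single appeal to the modularity of $\mathbf D_0(\mathbf P)$.
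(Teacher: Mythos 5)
Your proposal is correct and follows essentially the same route as the paper: identify $x$ with $L(x)$, translate the $LU$-expressions into joins and meets of $\mathbf D_0(\mathbf P)$ (using $LU(\{x\}\cup S)=L(x)\vee S$ and $L(A\cup B)=L(A)\cap L(B)$), and read off both identities (\ref{equ3}) and (\ref{equ4}) as instances of the modular law. Your extra attention to the closure of the intermediate sets in $D_0(\mathbf P)$ is a fair point of care, but the argument is the paper's own computation.
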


\begin{proof}
We have
\begin{align*}
L(U(x,y),U(x,z)) & \approx LU(x,y)\cap LU(x,z)\approx(L(x)\vee L(y))\cap(L(x)\vee L(z))\approx \\
                 & \approx L(x)\vee(L(y)\cap(L(x)\vee L(z)))\approx L(x)\vee(L(y)\cap LU(x,z))\approx \\
                 & \approx L(x)\vee L(y,U(x,z))\approx LU(x,L(y,U(x,z)))
\end{align*}
and
\begin{align*}
L(U(L(x,z),y),z) & \approx LU(L(x,z),y)\cap L(z)\approx(L(x,z)\vee L(y))\cap L(z)\approx \\
                 & \approx((L(x)\cap L(z))\vee L(y))\cap L(z)\approx(L(x)\cap L(z))\vee(L(y)\cap L(z))\approx \\
                 & \approx L(x,z)\vee L(y,z)\approx LU(L(x,z),L(y,z)).
\end{align*}
\end{proof}

In \cite{H95} a similar result was obtained for distributivity. Now we can prove an analogous result for $\mathbf D(\mathbf P)$ instead of $\mathbf D_0(\mathbf P)$.

\begin{lemma}\label{lem2}
Let $\mathbf P$ be a poset and assume $\mathbf D(\mathbf P)$ to be modular. Then $\mathbf P$ is strictly modular.
\end{lemma}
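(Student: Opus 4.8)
The plan is to transcribe the proof of Lemma~\ref{lem1}, but to work in the full completion $\mathbf D(\mathbf P)$ rather than in $\mathbf D_0(\mathbf P)$. The point is that the defining implications of strict modularity refer to \emph{arbitrary} subsets $X,Z\subseteq P$, and the relevant closed sets $L(X),L(Z)$ are in general not principal; however, since $LUL(A)=L(A)$ holds for every $A\subseteq P$, all of $L(x),L(y),L(Z),L(X),L(z)$ are genuine elements of $D(\mathbf P)$, so the lattice operations of $\mathbf D(\mathbf P)$ --- namely $A\vee B=LU(A,B)$ and $A\wedge B=A\cap B$ --- may be applied to them.

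First I would record the dictionary between the hypotheses of (\ref{equ1}), (\ref{equ2}) and comparability in $\mathbf D(\mathbf P)$: for $x\in P$ and $Z\subseteq P$ one has $x\leq Z\iff x\in L(Z)\iff L(x)\subseteq L(Z)$, and likewise $L(X)\leq z\iff L(X)\subseteq L(z)$. Next, using $U(L(A))=U(A)$ for principal $A$ together with the join/meet formulas above, I would check the identifications
\[
L(x)\vee L(y)=LU(x,y),\qquad L(y)\cap L(Z)=L(y,Z),\qquad L(x)\vee L(y,Z)=LU(x,L(y,Z)),
\]
and the analogous ones obtained by replacing $L(x)$ by $L(X)$ and $L(Z)$ by $L(z)$.

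For (\ref{equ1}) I would then assume $x\leq Z$, set $a:=L(x)$, $b:=L(y)$, $c:=L(Z)$, so that $a\leq c$ in $\mathbf D(\mathbf P)$; modularity gives $(a\vee b)\wedge c=a\vee(b\wedge c)$, and by the dictionary the left-hand side equals $LU(x,y)\cap L(Z)=L(U(x,y),Z)$ while the right-hand side equals $L(x)\vee L(y,Z)=LU(x,L(y,Z))$. For (\ref{equ2}) I would assume $L(X)\leq z$, set $a:=L(X)$, $b:=L(y)$, $c:=L(z)$, again with $a\leq c$; modularity gives $(a\vee b)\wedge c=a\vee(b\wedge c)$, whose two sides come out as $LU(L(X),y)\cap L(z)=L(U(L(X),y),z)$ and $L(X)\vee L(y,z)=LU(L(X),L(y,z))$. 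This yields both implications in the definition of strict modularity.

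I do not anticipate a real obstacle: the argument is a routine translation of the modular law into the language of $L$ and $U$, exactly as in Lemma~\ref{lem1}. The only things requiring a little attention are that the conditional hypotheses $x\leq Z$ and $L(X)\leq z$ translate precisely into the comparability $a\leq c$ that licenses the modular law, and that every set appearing as an argument of $\vee$ or $\wedge$ really lies in $D(\mathbf P)$; both follow at once from $LUL=L$ and the definition of the Dedekind--MacNeille completion.
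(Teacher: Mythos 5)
Your proposal is correct and follows essentially the same route as the paper: both identify $L(x)$, $L(y)$, $L(X)$, $L(Z)$ with elements of $\mathbf D(\mathbf P)$, note that the hypotheses $x\leq Z$ and $L(X)\leq z$ translate into the inclusions licensing the modular law, apply $(A\vee B)\cap C=A\vee(B\cap C)$ in the completion, and translate back via $A\vee B=LU(A,B)$ and $A\wedge B=A\cap B$. The only difference is that you make the dictionary and the closedness checks ($LUL=L$) explicit, which the paper leaves implicit.
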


\begin{proof}
Let $a,b,c\in P$ and $A,C\subseteq P$. If $a\leq C$ then
\begin{align*}
L(U(a,b),C) & =LU(a,b)\cap L(C)=(L(a)\vee L(b))\cap L(C)=L(a)\vee(L(b)\cap L(C))= \\
            & =L(a)\vee L(b,c)=LU(a,L(b,C))
\end{align*}
and if $L(A)\leq c$ then
\begin{align*}
L(U(L(A),b),c) & =LU(L(A),b)\cap L(c)=(L(A)\vee L(b))\cap L(c)= \\
& =L(A)\vee(L(b)\cap L(c))=L(A)\vee L(b,c)=LU(L(A),L(b,c)).
\end{align*}
\end{proof}

It is well-known that the Dedekind-MacNeille completion of an orthomodular poset need not be an orthomodular lattice. However, we can prove the following

\begin{corollary}
Let $\mathbf P=(P,\leq,{}',0,1)$ be an orthoposet and assume $\mathbf D_0(\mathbf P)$ to be modular. Then
\begin{enumerate}
\item[{\rm(i)}] $\mathbf P$ is strongly modular,
\item[{\rm(ii)}] if one defines
\begin{align*}
M(x,y) & :=L(U(x,y'),y), \\
R(x,y) & :=LU(L(x,y),x')
\end{align*}
for all $x,y\in P$ then $(P,\leq,{}',M,R,0,1)$ is a divisible operator left residuated poset,
\item[{\rm(iii)}] there exists an extension $^*$ of $'$ to $D_0(\mathbf P)$ such that $(D_0(\mathbf P),\vee,\cap,{}^*,\{0\},P)$ is a modular ortholattice and hence an orthomodular lattice, and if we define
\begin{align*}
      A\odot B & :=(A\vee B^*)\cap B, \\
A\rightarrow B & :=(A\cap B)\vee A^*
\end{align*}
for all $A,B\in D_0(\mathbf P)$ then $(D_0(\mathbf P),\vee,\cap,\odot,\rightarrow,0,1)$ is a divisible left residuated lattice.
\end{enumerate}
\end{corollary}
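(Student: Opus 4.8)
The plan is to obtain all three parts by stitching together results already proved in the paper; no fresh computation should be required.

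For (i) I would simply note that the hypotheses are exactly those of Lemma~\ref{lem1}, which asserts that modularity of $\mathbf D_0(\mathbf P)$ forces $\mathbf P$ to be strongly modular. For (ii) I would first remark that an orthoposet is in particular a bounded poset whose unary operation $'$ is a complementation --- the orthocomplementation LU-identities $U(x,x')\approx\{1\}$ and $L(x,x')=\{0\}$ are precisely the definition of a complementation on a bounded poset. Together with part (i) this says that $\mathbf P$ is a bounded strongly modular poset with complementation, so Theorem~\ref{th1} applies directly and yields that $M(x,y):=L(U(x,y'),y)$ and $R(x,y):=LU(L(x,y),x')$ turn $\mathbf P$ into a divisible operator left residuated poset.

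For (iii) I would invoke Theorem~\ref{th3} to extend $'$ to an orthocomplementation $^*$ on $\mathbf D_0(\mathbf P)$. Recalling that in $\mathbf D_0(\mathbf P)$ one has $A\vee B=LU(A,B)$, $A\wedge B=A\cap B$, and bounds $L(0)=\{0\}$ and $L(1)=P$, the assumed modularity makes $(D_0(\mathbf P),\vee,\cap,{}^*,\{0\},P)$ a modular ortholattice, hence an orthomodular lattice by the well-known implication recalled in the introduction. Since a modular ortholattice is in particular a complemented modular lattice, Theorem~\ref{th2} applies to it with $'$ read as $^*$ and $\wedge$ as $\cap$: defining $A\odot B:=(A\vee B^*)\cap B$ and $A\rightarrow B:=(A\cap B)\vee A^*$ makes $(D_0(\mathbf P),\vee,\cap,\odot,\rightarrow,\{0\},P)$ a divisible left residuated lattice, which is the assertion.

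Because every step is a citation of an earlier theorem or lemma, I expect no genuine obstacle. The only places needing a moment's care are the bookkeeping in (iii): checking that $\mathbf D_0(\mathbf P)$ really is a bounded lattice (it contains $L(0)$ and $L(1)$, since it is generated by $P$ and $0,1\in P$) and that the extension $^*$ supplied by Theorem~\ref{th3} is genuinely a complementation on $\mathbf D_0(\mathbf P)$, so that Theorem~\ref{th2} is legitimately applicable.
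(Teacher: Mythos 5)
Your proposal is correct and follows exactly the paper's own route: the paper proves this corollary by citing Lemma~\ref{lem1} for (i), Theorem~\ref{th1} for (ii), and Theorems~\ref{th3} and \ref{th2} for (iii), which is precisely your decomposition. The extra bookkeeping you note (that an orthoposet's $'$ is a complementation and that $^*$ from Theorem~\ref{th3} makes $\mathbf D_0(\mathbf P)$ a complemented modular lattice) is sound and just makes the citation chain explicit.
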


\begin{proof}
This follows from Theorems~\ref{th2}, \ref{th1} and \ref{th3} and Lemma~\ref{lem1}.
\end{proof}

\begin{corollary}
Let $\mathbf P=(P,\leq,{}',0,1)$ be an orthoposet and assume $\mathbf D(\mathbf P)$ to be modular. Then
\begin{enumerate}
\item[{\rm(i)}] $\mathbf P$ is strictly modular,
\item[{\rm(ii)}] if one defines
\begin{align*}
M(x,y) & :=L(U(x,y'),y), \\
R(x,y) & :=LU(L(x,y),x')
\end{align*}
for all $x,y\in P$ then $(P,\leq,{}',M,R,0,1)$ is a divisible operator left residuated poset,
\item[{\rm(iii)}] there exists an extension $^*$ of $'$ to $D(\mathbf P)$ such that $(D(\mathbf P),\vee,\cap,{}^*,\{0\},P)$ is a modular ortholattice and hence an orthomodular lattice, and if we define
\begin{align*}
      A\odot B & :=(A\vee B^*)\cap B, \\
A\rightarrow B & :=(A\cap B)\vee A^*
\end{align*}
for all $A,B\in D(\mathbf P)$ then $(D(\mathbf P),\vee,\cap,\odot,\rightarrow,0,1)$ is a divisible left residuated lattice.
\end{enumerate}
\end{corollary}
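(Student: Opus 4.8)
The plan is simply to package the statement from the results already proved, exactly as the preceding corollary is obtained, but with $\mathbf D_0(\mathbf P)$ replaced by $\mathbf D(\mathbf P)$, Lemma~\ref{lem1} replaced by Lemma~\ref{lem2}, and Theorem~\ref{th1} replaced by Theorem~\ref{th4}. Part (i) is immediate: since $\mathbf D(\mathbf P)$ is assumed to be modular, Lemma~\ref{lem2} gives at once that $\mathbf P$ is strictly modular.

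For (ii), I would observe that $(P,\leq,{}',0,1)$, being a bounded orthoposet, is in particular a bounded poset with a complementation, and by part~(i) it is strictly modular. Hence Theorem~\ref{th4} applies verbatim with the stated $M$ and $R$, yielding that $(P,\leq,{}',M,R,0,1)$ is a divisible operator left residuated poset. Nothing new needs to be checked here.

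For (iii), the first step is to invoke Theorem~\ref{th3}: since $\mathbf P$ is an orthoposet, $'$ extends to an orthocomplementation $^*$ on $\mathbf D(\mathbf P)$, namely $A^*=L(A')$ with $A'=\{x'\mid x\in A\}$. The lattice $\mathbf D(\mathbf P)=(D(\mathbf P),\vee,\cap,\{0\},P)$ has bottom $\{0\}=L(0)$ and top $P=L(1)$, and it is modular by hypothesis; equipped with the antitone involution $^*$, which is also a complementation, it is therefore a modular ortholattice, hence orthomodular (every modular ortholattice being orthomodular). Finally I would apply Theorem~\ref{th2} to the complemented modular lattice $(D(\mathbf P),\vee,\cap,{}^*,\{0\},P)$: with $A\odot B:=(A\vee B^*)\cap B$ and $A\rightarrow B:=(A\cap B)\vee A^*$ one obtains a divisible left residuated lattice $(D(\mathbf P),\vee,\cap,\odot,\rightarrow,0,1)$, where $0$ and $1$ stand for the lattice bounds $\{0\}$ and $P$.

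I do not expect any genuine obstacle: the corollary is a routine combination of Lemma~\ref{lem2}, Theorem~\ref{th4}, Theorem~\ref{th3} and Theorem~\ref{th2}. The only point worth one line of verification is bookkeeping — that the operations $\odot$ and $\rightarrow$ written in (iii) are precisely the instances supplied by Theorem~\ref{th2} for the complemented modular lattice just constructed, and that the constants named $0$ and $1$ there are indeed its lattice bounds $\{0\}=L(0)$ and $P=L(1)$. Once this is noted, the proof reduces to the single sentence: this follows from Theorems~\ref{th2}, \ref{th4} and \ref{th3} and Lemma~\ref{lem2}.
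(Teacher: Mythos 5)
Your proposal is correct and matches the paper's own proof, which simply cites Theorems~\ref{th2}, \ref{th4} and \ref{th3} together with Lemma~\ref{lem2}; your write-up just spells out the same combination (Lemma~\ref{lem2} for (i), Theorem~\ref{th4} for (ii), Theorem~\ref{th3} plus modularity of $\mathbf D(\mathbf P)$ and Theorem~\ref{th2} for (iii)). No gap to report.
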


\begin{proof}
This follows from Theorems~\ref{th2}, \ref{th4} and \ref{th3} and Lemma~\ref{lem2}.
\end{proof}

Authors' addresses:

Ivan Chajda \\
Palack\'y University Olomouc \\
Faculty of Science \\
Department of Algebra and Geometry \\
17.\ listopadu 12 \\
771 46 Olomouc \\
Czech Republic \\
ivan.chajda@upol.cz

Helmut L\"anger \\
TU Wien \\
Faculty of Mathematics and Geoinformation \\
Institute of Discrete Mathematics and Geometry \\
Wiedner Hauptstra\ss e 8-10 \\
1040 Vienna \\
Austria, and \\
Palack\'y University Olomouc \\
Faculty of Science \\
Department of Algebra and Geometry \\
17.\ listopadu 12 \\
771 46 Olomouc \\
Czech Republic \\
helmut.laenger@tuwien.ac.at

\begin{thebibliography}9
\bibitem B
R.~Belohl\'avek, Fuzzy Relational Systems. Foundations and Principles. Kluwer, New York 2002. ISBN~0-306-46777-1/hbk.
\bibitem{CL17}
I.~Chajda and H.~L\"anger, Orthomodular lattices can be converted into left residuated l-groupoids. Miskolc Math.\ Notes {\bf18} (2017), 685-689.
\bibitem{CL18}
I.~Chajda and H.~L\"anger, Residuated operators in complemented posets. Asian-European J.\ Math.\ {\bf11} (2018), 1850097 (15 pages).
\bibitem{CLa}
I.~Chajda and H.~L\"anger, Left residuated operators induced by posets with a unary operation. Soft Computing (submitted).
\bibitem{GJKO}
N.~Galatos, P.~Jipsen, T.~Kowalski and H.~Ono, Residuated Lattices: An Algebraic Glimpse at Substructural Logics. Elsevier, Amsterdam 2007. ISBN~978-0-444-52141-5.
\bibitem{H95}
R.~Hala\v s, Annihilators and ideals in distributive and modular ordered sets. Acta Univ.\ Palack.\ Olomuc.\ Fac.\ Rerum Natur.\ Math.\ {\bf34} (1995), 31--37.
\bibitem{H96}
R.~Hala\v s, Some properties of Boolean ordered sets. Czechoslovak Math.\ J.\ {\bf46} (1996), 93--98.
\end{thebibliography}
\end{document}